\begin{document}

\title*{On slice regular  Bergman spaces  and fiber bundles}
% Use \titlerunning{Short Title} for an abbreviated version of
% your contribution title if the original one is too long
\author{Jos\'e Oscar Gonz\'alez-Cervantes}
% Use \authorrunning{Short Title} for an abbreviated version of
% your contribution title if the original one is too long
\institute{Jos\'e Oscar Gonz\'alez-Cervantes \at Departamento de Matem\'aticas, ESFM-Instituto Polit\'ecnico Nacional. 07338, Ciudad M\'exico, M\'exico, \email{jogc200678@gmail.com}}
%\and Name of Second Author \at Name, Address of Institute %\email{name@email.address}}
%
% Use the package "url.sty" to avoid
% problems with special characters
% used in your e-mail or web address
%
\maketitle

\abstract*{The present work shows that  the quaternionic slice regular  Bergman space is the base space of 
 a coordinate sphere bundle.}

 \abstract{
Recently, fiber bundle theory has been widely used in the study of the slice regular functions and continuing with this line of research, the present work shows that  the quaternionic slice regular  Bergman space is the base space of 
 a coordinate sphere bundle and   some properties of   quaternionic slice regular  
Bergman theory   are studied from the  point of view of  the  fiber bundle theory.}

\section{Introduction}
\label{sec:1}
Fiber bundle theory has been commonly used to interpret some physical  
 and mathematical  phenomena,   see    \cite{BP,BD,Bredon,NS,W}. 
  Bergman's theory  has  been developed in hypercomplex analysis    
	see  \cite{bradel,bds,1SRBergman, 2SRBergman,CGS3,const,conskrau,conskrau2,delanghe,shavas,shavas2,shavas3}. 
	In particular,   works  \cite{1SRBergman,2SRBergman,CGS3} 
were the first to introduce the theory of slice regular  Bergman spaces.   
 In this paper  the fiber bundle theory is used to  study  the second notion  of quaternionic Bergman space introduced in 
 \cite{1SRBergman} that consists of a family of slice regular Bergman    associated to a slice generate by $\{1, {\bf i}\}$, 
where  ${\bf i} \in \mathbb S^2$.
For ease of reading, Section \ref{2} presents basic concepts of the quaternionic slice regular Bergman space  and fiber bundle theory and Section \ref{S3} presents the main results.

\section{Preliminaries}\label{2}

\subsection{Quaternionic slice regular Bergman spaces}

The concepts and  results of  this subsection are given in \cite{newadvances,1SRBergman,2SRBergman,CGS3,CSS4,GenSS,advances}.

The skew-field  of quaternions   $\mathbb H$ consists of  $q=x_0  + x_{1} {e_1} +x_{2} e_2 + x_{3} e_3$ where $x_0, x_1, x_2, x_3\in \mathbb R$    and   
the quaternionic units satisfy $e_1^2=e_2^2=e_3^2=-1$,  $e_1e_2=-e_2e_1=e_3$, $e_2e_3=-e_3e_2=e_1$, $e_3e_1=-e_1e_3=e_2$. The set 
 $\{e_1,e_2,e_3\}$ is  called   the standard basis of $\mathbb R^3$.
The vector part of $q$ is denoted by    ${\bf{q}}= x_{1} {e_1} +x_{2} e_2 + x_{3} e_3$ and is usually  identified with the vector  ${\bf{q}}= (x_{1} ,x_{2}   ,x_{3})\in \mathbb R^3$.  The  real part is $q_0=x_0$. The  quaternionic conjugate of $q$  is   $\bar q=q_0-{\bf q} $ and  its   norm  is  $\|q\|:=\sqrt{x_0^2 +x_1^2+x_2^3+x_3^2}= \sqrt{q\bar q} = \sqrt{\bar q  q}$.
 Denote   $\mathbb B^4:=   \{q \in \mathbb H\ \mid \ \|q\|<1 \}$,  
$\mathbb{S}^2:=  \{{\bf q}\in\mathbb R^3  \mid \|{\bf q}\| =1  \}$, $\mathbb{S}^3:=\{ {  q}\in\mathbb H \mid  \|{  q}\| =1   \}$ and 
	  $ T:=\{ ({\bf i},{\bf j}) \in \mathbb S^2 \times \mathbb S^2   \  
	  \mid    \{{\bf i},{\bf j},{\bf ij}\} \textrm{ and }
	  \{e_1,e_2,e_3\} \textrm{ are   co-oriented}\}$. 
		
	Note that    ${\bf i}^{2}=-1$  for any ${\bf i}\in \mathbb S^2$, so 
   $\mathbb{C}({\bf i}):=\{x+{\bf i}y; \ |\ x,y\in\mathbb{R}\}\approx \mathbb C$ as  fields and from the   algebra of quaternions we see that  $\mathbb H=\bigcup_{{\bf i}\in \mathbb S^2} \mathbb C({\bf i})$.
 The  quaternionic rotations that preserve $\mathbb R^3$ are given by  $q \mapsto v q \bar v$ for all $q\in \mathbb H$, where  
 $v\in \mathbb S^3$, see \cite{HJ}. Then let us consider  $R_{v}({\bf i}, {\bf j}):= (v{\bf i}\bar v, v{\bf j}\bar v)$ for all $({\bf i}, {\bf j})\in T$.

\begin{definition}
 Let $\Omega\subset\mathbb H$ be an open set. A real differentiable function $f:\Omega\to \mathbb{H}$  is called  slice regular function on $\Omega$,   
if   
\begin{align*}
\overline{\partial}_{{\bf i}}f\mid_{_{\Omega\cap \mathbb C({\bf i})}}:=\frac{1}{2}\left (\frac{\partial}{\partial x}+{\bf i} \frac{\partial}{\partial y}\right )f\mid_{_{\Omega\cap \mathbb C({\bf i})}}=0  \textrm{ on  $\Omega\cap \mathbb C({\bf i})$,}
\end{align*}
for all ${\bf i}\in \mathbb{S}^2$ and the Cullen derivative of $f$  is   
$f'=\displaystyle 
 {\partial}_{{\bf i}}f\mid_{_{\Omega\cap \mathbb C({\bf i})}} 
= \frac{\partial}{\partial x} f\mid_{_{\Omega\cap \mathbb C({\bf i})}}= \partial_xf\mid_{_{\Omega\cap \mathbb C({\bf i})}} $. 
 The quaternionic right module of the  slice regular functions on $\Omega$ is denoted by $\mathcal{SR}(\Omega)$.
\end{definition}

\begin{definition}
 A set $U\subset\mathbb H$  is called axially symmetric if  $U\cap \mathbb R\neq \emptyset$ and if $x+{\bf i}y \in U$ with $x,y\in\mathbb R$   then $\{x+{\bf j}y \ \mid  \ {\bf j}\in\mathbb{S}^2\}\subset U$. In addition, a   domain  $U\subset\mathbb H$ is a slice domain, or s-domain, if   $U_{\bf i} := U\cap \mathbb C({\bf i})$ is  a domain in   $\mathbb C({\bf i})$  for all ${\bf i}\in\mathbb S^2$.  Given an axially symmetric s-domain  $\Omega\subset \mathbb H$   denote 
 $$S_{\Omega}:=\{  (x,y)\in \mathbb R^2 \ \mid \ \textrm{ there exists } {\bf i}\in \mathbb S^2 \textrm{ and } x+{\bf i}y\in \Omega  \}$$
\end{definition}

The following theorem presents two   important results in the slice regular function theory.

\begin{theorem}\label{1.3} Let   $\Omega\subset \mathbb H$  be an axially symmetric s-domain.  
Given $f\in\mathcal{SR}(\Omega)$ and $({\bf i},{\bf j})\in T$.    
\begin{enumerate}
\item (Splitting Lemma) There exist $f_1 ,f_2\in Hol(\Omega_{\bf i})$   such that
  $f_{\mid_{\Omega_{\bf i}}} =f_1  + f_2  {\bf j}$ on $\Omega_{\bf i}$.
  
\item (Representation Formula) For every  $q=x+{\bf I}_q y \in \Omega$ where $x,y\in\mathbb R$ and  ${\bf I}_q \in \mathbb S^2$  we have 
\begin{align*}
f(x+{\bf I}_q y) = \frac {1}{2}[   f(x+{\bf i}y)+ f(x-{\bf i}y)]
+ \frac {1}{2} {\bf I}_q {\bf i}[ f(x-{\bf i}y)- f(x+{\bf i}y)].
\end{align*} 
\end{enumerate}
\end{theorem}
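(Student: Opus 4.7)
For part (1), the Splitting Lemma, my plan is to exploit the fact that when $({\bf i},{\bf j})\in T$, the tuple $\{1,{\bf i},{\bf j},{\bf ij}\}$ is an $\mathbb R$-basis of $\mathbb H$, so $\mathbb H$ decomposes as the internal direct sum $\mathbb C({\bf i}) \oplus \mathbb C({\bf i}){\bf j}$ of right $\mathbb C({\bf i})$-modules. Applying this decomposition pointwise to $f$ on $\Omega_{\bf i}$ produces unique real-differentiable $\mathbb C({\bf i})$-valued components $f_1,f_2:\Omega_{\bf i}\to\mathbb C({\bf i})$ with $f\mid_{\Omega_{\bf i}} = f_1 + f_2{\bf j}$. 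Because ${\bf i}$ commutes with any $\mathbb C({\bf i})$-valued function, one has $\overline{\partial}_{\bf i}(f_2 {\bf j}) = (\overline{\partial}_{\bf i} f_2){\bf j}$ by the Leibniz rule, so the vanishing of $\overline{\partial}_{\bf i} f$ on $\Omega_{\bf i}$ splits into the two independent classical conditions $\overline{\partial}_{\bf i}f_1 = 0$ and $\overline{\partial}_{\bf i}f_2 = 0$, yielding $f_1,f_2\in Hol(\Omega_{\bf i})$.

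For part (2), the Representation Formula, my plan is to introduce the auxiliary function $g:\Omega\to \mathbb H$ given by the right-hand side of the claimed identity and to prove $g\equiv f$. First I would verify the formula trivially when $q=x\pm{\bf i}y\in\Omega_{\bf i}$: for ${\bf I}_q={\bf i}$ one has $\frac{1}{2}{\bf I}_q{\bf i} = -\frac{1}{2}$, which collapses $g(q)$ to $f(x+{\bf i}y)$, and analogously for ${\bf I}_q=-{\bf i}$. Next I would show $g\in\mathcal{SR}(\Omega)$: fix ${\bf I}\in\mathbb S^2$ and $q=x+{\bf I}y\in\Omega_{\bf I}$, use the Splitting Lemma to write $f(x\pm{\bf i}y) = f_1(x\pm{\bf i}y)+f_2(x\pm{\bf i}y){\bf j}$, then substitute into $g$ and apply $\overline{\partial}_{\bf I}$; since $f_1,f_2$ are holomorphic on $\Omega_{\bf i}$, one can use the classical Cauchy--Riemann equations in $(x,y)$ to regroup $\partial_x g + {\bf I}\partial_y g$ and verify that it vanishes. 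Finally, since both $f$ and $g$ lie in $\mathcal{SR}(\Omega)$ and agree on $\Omega_{\bf i}$, the Identity Principle (applied via the Splitting Lemma to each holomorphic component of $f-g$ on $\Omega_{\bf i}$ and then propagated to every slice by repeating the same argument on $\Omega_{\bf I}$) forces $f\equiv g$ on $\Omega$.

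The main obstacle I anticipate is the slice-regularity check on $g$, because the coefficient $\frac{1}{2}{\bf I}_q{\bf i}$ mixes two distinct imaginary units and one must carefully track the non-commutativity, in particular the anticommutation relations between ${\bf I}$ and ${\bf i}$ when these units are not parallel. If the direct manipulation turns unwieldy, a cleaner fallback is to exploit the axial symmetry of $\Omega$ through a rotation $R_v$ with $v\in\mathbb S^3$ moving ${\bf i}$ to ${\bf I}$, so that slice regularity of $f$ on $\Omega_{\bf i}$ transports to slice regularity of $g$ on $\Omega_{\bf I}$; either route reduces the Representation Formula to the Identity Principle modulo a mechanical computation.
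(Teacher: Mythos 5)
The paper does not prove this theorem: it is quoted as a known preliminary result, with the proofs delegated to the cited literature on slice regular functions (Gentili--Struppa, Colombo--Sabadini--Struppa et al.). Your argument is correct and is essentially the standard proof from those references: for (1), the real basis $\{1,{\bf i},{\bf j},{\bf ij}\}$ gives the direct sum $\mathbb H=\mathbb C({\bf i})\oplus\mathbb C({\bf i}){\bf j}$, and $\overline{\partial}_{\bf i}$ respects the two summands; for (2), the right-hand side $g(x+{\bf I}y)=\alpha(x,y)+{\bf I}\beta(x,y)$ is checked to be slice regular from the Cauchy--Riemann relation $\partial_y f={\bf i}\,\partial_x f$ on $\Omega_{\bf i}$ (the feared non-commutativity never arises, since ${\bf I}$ only ever appears as a left coefficient of the ${\bf I}$-free functions $\alpha,\beta$), and the Identity Principle propagated slice by slice through $\Omega\cap\mathbb R$ gives $g\equiv f$. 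The only points worth adding explicitly are the well-definedness of $g$ under the ambiguity $(y,{\bf I}_q)\mapsto(-y,-{\bf I}_q)$, which follows since $\alpha$ is even and $\beta$ is odd in $y$, and the use of $\Omega\cap\mathbb R\neq\emptyset$ (part of the s-domain hypothesis) to make the identity-principle propagation work.
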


The above results help us directly introduce the following operators and their properties.

\begin{definition}Let   $\Omega\subset \mathbb H$  be an axially symmetric s-domain and $({\bf i},{\bf j})\in T$. Define 
$     Q_{ {\bf i},{\bf j}} :    \mathcal{SR}(\Omega) \to \textrm{Hol}(\Omega_{  {\bf i}})+ \textrm{Hol}(\Omega_{ {\bf i}}){  {\bf j} }$ and 
$ P_{  {\bf i},{\bf j} } :  \textrm{Hol}(\Omega_{  {\bf i}})+ \textrm{Hol}(\Omega_{  {\bf i}}){  {\bf j}} \to  \mathcal{SR}(\Omega)$ 
 as follows:
$ Q_{ {\bf i} , {\bf j} } [f ]= f\mid_{\Omega_{{\bf i}}} = f_1+f_2{\bf j}$ for all $f\in\mathcal{SR}(\Omega)$ where $f_1,f_2\in \textrm{Hol}(\Omega_{\bf i})$,  and 
 $   P_{ {\bf i},{\bf j} }[g](q)= \frac{1}{2}\left[(1+ {\bf I}_q{ \bf  i})g(x-y{{\bf i}}) + (1- {\bf I}_q {  {\bf i}}) g(x+y {  {\bf i} })\right]
	$,  for all  $g\in  \textrm{Hol}(\Omega_{  {\bf i} })+ \textrm{Hol}(\Omega_{  {\bf i} }){  {\bf j} }$ where  $ q=x+{\bf I}_qy \in \Omega$, respectively.  In addition,   $ P_{  {\bf i} ,  {\bf j}  }\circ Q_{ {\bf i} ,{\bf j}  }= \mathcal I_{\mathcal{SR}(\Omega)}$
 and    $Q_{  {\bf i} ,  {\bf j} }\circ P_{ {\bf i} ,  {\bf j}   }= \mathcal I_{ \textrm{Hol}(\Omega_{ {\bf i} })+ \textrm{Hol}(\Omega_{ {\bf i} }){ {\bf j} }} $,
 where $\mathcal I_{\mathcal{SR}(\Omega)} $ and $ \mathcal I_{ \textrm{Hol}(\Omega_{ {\bf i} })+ \textrm{Hol}(\Omega_{ {\bf i} }){ {\bf j} }}$ are identity operators. 
	
On the other hand, given  $f\in \mathcal{SR}(\Omega)$ and     $({\bf i},{\bf j}) \in T$, from direct computation we see that   
\begin{align*} &  D_1[f, {\bf i}, {\bf j}   ] :=   \frac{    Q_{{\bf i},{\bf j}}[f] + \overline{   Q_{{\bf i},{\bf j}}[f]  }   }{ 2 }  ,  & \  D_2[f, {\bf i}, {\bf j}     ] :=  - \frac{  Q_{{\bf i},{\bf j} }[f] {\bf i}  +  \overline{   Q_{{\bf i} , {\bf j} }[f]   {\bf i}  }   }{2}   , \nonumber \\
&  D_3[f, {\bf i}, {\bf j}      ] :=   - \frac{ Q_{{\bf i},{\bf j} }[f]  {\bf j}   + \overline{   Q_{{\bf i},{\bf j} }[f]   {\bf j} }   }{2}   , &  \  D_4[f, {\bf i}, {\bf j}     ] :=    \frac{ Q_{ {\bf i}, {\bf j} }[f] {\bf j}{\bf i}  + \overline{   Q_{ {\bf i},{\bf j} }[f]   {\bf j}{\bf i}    }   }{2} 
\end{align*}
are the real components of  $Q_{{\bf i},{\bf j}}[f]$ in terms of    base $\{1, {\bf i}, {\bf j}, {\bf ij}\}$.  Therefore, 
$ Q_{{\bf i},{\bf j}}[g] =  D_1[g, {\bf i}, {\bf j}   ]  +  D_2[g, {\bf i}, {\bf j}   ] {\bf i}   +  D_3[g, {\bf i}, {\bf j}   ] {\bf j}+  D_4[g, {\bf i}, {\bf j}  ] {\bf i}{\bf j} $.
\end{definition}

\begin{definition}\label{def123} 

Given     $f,g \in \mathcal {SR}(\Omega) $  and $({\bf i}, {\bf j}) \in T$ define   
 the $\bullet_{{\bf i}, {\bf j}}-$product: $ 
f\bullet_{_{{\bf i}, {\bf j}}} g := P_{\bf{i},{\bf j}} [ \  f_1g_1+ f_2g_2 {\bf j} \ ]$,
   where $f_1,f_2,g_1,g_2\in \textrm{Hol}(\Omega_{\bf i})$ are such that  $Q_{{\bf i}, {\bf j}}[f]=f_1+f_2{\bf j}$ and  $Q_{{\bf i}, {\bf j}}[g]=g_1+g_2{\bf j}$.

In particular, if $\Omega = \mathbb B^4(0,1)$ then  there exists two sequences of quaterinons $(a_n)_{n\geq 0}$ and $(b_n)_{n\geq 0}$ such that 
 $
f(q)= \sum_{n=0}^{\infty} q^n a_n$,  $  g(q)= \sum_{n=0}^{\infty} q^n b_n$  then their   $*-$product is  
$ 
f*g(q):= \sum_{n=0}^{\infty} q^n  \sum_{k=0}^{n} a_k b_{n-k}$ 
 for all $q\in\mathbb B^4(0,1)$.\end{definition}

\begin{definition}

 Let $\Omega\subseteq  \mathbb{H}$ be an axially symmetric s-domain.
We introduce a family  of quaternionic slice regular  Bergman spaces. Given ${\bf{i}}\in \mathbb{S}^2$   denote 
  $$
\displaystyle{     \mathcal A_{ \bf i  }  (\Omega ):=    \left\{    f\in \mathcal{SR}(\Omega) \   \mid  \
    \displaystyle \|f     \|^2_{{\mathcal A}_{\bf i} (\Omega)}:= \int_{\Omega_{\bf i}}|f_{\mid_{\Omega_{{\bf i}}}}|^2 d\sigma_{\bf i} <\infty   \right\} }
  $$
equipped with the  norm
$
  \displaystyle\|f\|_{{\mathcal A}_{\bf i}(\Omega )}:= \left(\int_{\Omega_{\bf i}}|f_{\mid_{\Omega_{{\bf i}}}}|^2 d\sigma_{\bf i}\right)^{\frac{1}{2}}, \quad \forall f\in \mathcal A_{\bf i}(\Omega),
$ and with  the scalar product
 $
\langle f,g\rangle_{\mathcal A_{\bf i}   (\Omega)}=\int_{\Omega_{{\bf i}}} \overline{f\mid_{\Omega_{{\bf i}} }}\, g \mid_{\Omega_{{\bf i}} }\,d\sigma_{{\bf i}}, 
 \quad \forall f,g \in \mathcal A_{\bf i}(\Omega)$,
 where $d \sigma_{\bf i}$ denotes the  element of area in the complex plane $\mathbb C({\bf i})$.

\end{definition}

\begin{remark}\label{Rem11} Given   ${\bf i}, {\bf k}\in \mathbb S^2$ and $f \in \mathcal A_{\bf k}   (\Omega)$.   Then Representation Formula 
implies that $
    |f(x+y{\bf{i}})|^2\leq 2 \left[ | f(x+y{\bf k})|^2 + | f(x-y{\bf k})|^2\right]$ and    the integration on  $S_\Omega$     gives us  
    $  \int_{\Omega_{\bf i}}|f_{\Omega_{\bf i}}|^2d\sigma_{\bf i} \leq 2 \|f\|_{{\mathcal A_{\bf k}}(\Omega)}^2 $, 
     i.e., 
  $\|f\|_{\mathcal A_{{\bf i}}(\Omega )} \leq \sqrt{2} \|f\|_{\mathcal A_{\bf k}(\Omega)}$. Therefore,  $    \mathcal A_{\bf i}(\Omega)$ and 
  $   \mathcal A_{\bf k}(\Omega )$ are  equal functions sets and   have equivalent norms. 
\end{remark}

The following theorem  was proved in \cite{1SRBergman}.
\begin{theorem} 
 Let $\Omega$ be a bounded axially symmetric s-domain and ${\bf i}\in \mathbb S^2$. Then  $\left( \mathcal A_{\bf i} (\Omega), \langle \cdot, \cdot\rangle_{\mathcal A_{\bf i}(\Omega)} \right)$ is a   quaternionic right   Hilbert module. In addition, given $q\in \Omega$ let    $\phi_q$  
the  evaluation functional at point $q$   on    $\mathcal A _{\bf i} (\Omega)$. 
Then 
there  exists an unique  function $K_q({\bf i}) \in \mathcal A _{\bf i} (\Omega) $ such that
   $
  \phi_q[f ]=\langle K_q({\bf i}), f \rangle_{\mathcal A_{\bf i} (\Omega ) }
   $  for all 
   $f  \in \mathcal A_{\bf i} (\Omega )$,   
   due to    
the evaluation functional on $ \mathcal A_{\bf i}(\Omega)$ is 
   a bounded quaternionic right-linear functional and T. Riesz representation theorem for quaternionic right  Hilbert modules. 		
		  \end{theorem}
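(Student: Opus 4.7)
My plan is to verify in order: (i) pre-Hilbert module structure, (ii) completeness, (iii) boundedness of the evaluation functional, and then (iv) invoke the quaternionic Riesz representation theorem to obtain $K_q({\bf i})$.

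For (i), the axioms of a right $\mathbb H$-inner product (right $\mathbb H$-linearity of $\langle f, g \cdot \alpha \rangle = \langle f,g\rangle \alpha$, conjugate symmetry, and positive definiteness) follow directly from the definition $\langle f,g\rangle_{\mathcal A_{\bf i}(\Omega)}=\int_{\Omega_{\bf i}}\overline{f_{|\Omega_{\bf i}}} \, g_{|\Omega_{\bf i}}\, d\sigma_{\bf i}$, since conjugation is an involution on $\mathbb H$ and the integrand is nonnegative real when $f=g$. For (ii), fix $({\bf i},{\bf j})\in T$ and use the Splitting Lemma: every $f\in\mathcal A_{\bf i}(\Omega)$ restricts as $f_{|\Omega_{\bf i}}=f_1+f_2{\bf j}$ with $f_1,f_2\in \mathrm{Hol}(\Omega_{\bf i})$, and since $\{1,{\bf j}\}$ is orthogonal with respect to the real-part trace, one has $\|f\|_{\mathcal A_{\bf i}(\Omega)}^2=\|f_1\|_{L^2(\Omega_{\bf i})}^2+\|f_2\|_{L^2(\Omega_{\bf i})}^2$. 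A Cauchy sequence $(f^{(n)})\subset\mathcal A_{\bf i}(\Omega)$ thus produces Cauchy sequences $(f_1^{(n)}),(f_2^{(n)})$ in the classical Bergman space $\mathcal A^2(\Omega_{\bf i})\subset\mathrm{Hol}(\Omega_{\bf i})$, which converges; setting $g:=P_{{\bf i},{\bf j}}[f_1^\infty+f_2^\infty{\bf j}]\in\mathcal{SR}(\Omega)$ and using $Q_{{\bf i},{\bf j}}\circ P_{{\bf i},{\bf j}}=\mathcal I$, the limit lies in $\mathcal A_{\bf i}(\Omega)$ with $f^{(n)}\to g$ in norm.

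For (iii), I would first handle $q\in\Omega_{\bf i}$: classical Bergman theory on $\Omega_{\bf i}\subset\mathbb C({\bf i})$ gives, for a small disk $\overline{D(q,r)}\subset\Omega_{\bf i}$, the mean value inequality $|f_k(q)|\le\frac{1}{\sqrt{\pi}r}\|f_k\|_{L^2(\Omega_{\bf i})}$ for $k=1,2$, hence $|f(q)|\le C(q)\|f\|_{\mathcal A_{\bf i}(\Omega)}$. For a general $q=x+{\bf I}_q y\in\Omega$, the Representation Formula together with Remark \ref{Rem11} (or directly with the triangle inequality) gives $|f(q)|\le |f(x+{\bf i}y)|+|f(x-{\bf i}y)|$, each of which is dominated by the previous estimate applied at points of $\Omega_{\bf i}$, so $\phi_q$ is a bounded quaternionic right-linear functional on $\mathcal A_{\bf i}(\Omega)$.

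Finally, for (iv), since $\mathcal A_{\bf i}(\Omega)$ is a quaternionic right Hilbert module and $\phi_q$ is bounded right-linear, the T. Riesz representation theorem for quaternionic right Hilbert modules yields a unique $K_q({\bf i})\in\mathcal A_{\bf i}(\Omega)$ with $\phi_q[f]=\langle K_q({\bf i}),f\rangle_{\mathcal A_{\bf i}(\Omega)}$. The main obstacle is step (ii): one has to be careful that the orthogonal decomposition induced by $\{1,{\bf j}\}$ really turns convergence in $\mathcal A_{\bf i}(\Omega)$ into joint convergence of $f_1,f_2$ in classical Bergman norms, and that the reconstructed limit via $P_{{\bf i},{\bf j}}$ is genuinely in $\mathcal{SR}(\Omega)$ and independent of the auxiliary choice of ${\bf j}$, so that the norm equivalence from Remark \ref{Rem11} keeps the function space intrinsic.
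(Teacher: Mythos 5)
Your argument is correct, and it follows exactly the route the paper indicates: the paper does not prove this theorem itself but cites \cite{1SRBergman}, with the justification embedded in the statement being precisely your steps (iii)--(iv), namely boundedness and right-linearity of the evaluation functional plus the quaternionic Riesz representation theorem. Your steps (i)--(ii), filling in the Hilbert-module structure and completeness via the Splitting Lemma and the completeness of the classical Bergman space $\mathcal A^2(\Omega_{\bf i})$, are the standard details behind the cited result and are sound.
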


\begin{remark}
If we denote  $\mathcal K_{\Omega_{\bf i}}(q,\cdot):= \bar K_q({\bf i})$ then   the slice regular Bergman kernel  associated to $\Omega_{\bf i}$ 
is given by    $\mathcal K_{\Omega_{\bf i}}(\cdot,\cdot):\Omega_{\bf i}\times \Omega_{\bf i} \to \mathbb H$.   Representation Formula shows  that  
 $
\mathcal K_{\Omega}: \Omega\times \Omega \to \mathbb{H}$ satisfies: 
$$
\mathcal K_{\Omega}(x+y{\bf{k}}, r):=\frac{1}{2}(1-{\bf k} {\bf i}) \mathcal K_{\Omega_{\bf i}}(x+y{\bf i},r)+ \frac{1}{2}(1+{\bf k} {\bf i})\mathcal K_{\Omega_{\bf i}}(x+y{\bf i},r),
 $$
for all  $ {\bf k}\in \mathbb S^2$ and $r\in \Omega_{\bf i}$.
\end{remark}

In this work we will focus on the slice regular Bergman space associated with the quaternionic unit ball $\mathbb B^4$ to simplify the notation and computations.   If  $\Omega = \mathbb B^4$ then 
$\int_{\mathbb B^4_{\bf i}}|f_{\mid\mathbb B^4_{\bf i}}|^2d\sigma_{\bf i} =    \int_{ D}|f(x+{\bf i}y)|^2 d\mu_{x,y}$ and  
$\langle f, g \rangle _{ \mathcal A_{\bf i}( \mathbb B^4 ) } =    \int_{D} \overline{ f(x+{\bf i}y)}
g (x+{\bf i}y)    d\mu_{x,y}$,  for all $f, g \in \mathcal A_{\bf i}( \mathbb B^4)$,  
where $D:=\{(x,y)\in \mathbb R^2 \ \mid \  x^2+y^2< 1 \} $ and  $d\mu_{x,y}$ is the area differential in $\mathbb R^2$.

\begin{definition}Define   
$L :=\{ f:\mathbb B^4 \to \mathbb H \ \mid \ \int_{D}\|f(x+{\bf i}y)\|^2 d\mu_{x,y}<\infty, \quad \forall {\bf i}\in \mathbb S^2\}$. 
 Given     ${\bf i}\in \mathbb S^2$. The slice regular Bergman projection  $B_{\bf i}: L\to \mathcal A_{\bf i}(\mathbb B^4 ) $ 
 is       $B_{\bf i}[f](q) =\int_{\mathbb B^4_{{\bf i}}} \mathcal K_{\mathbb B^4}(q,z)
 g(z)d\sigma_{{\bf i}}$  for all $q\in \mathbb B^4$,  
where $ \mathcal K_{\mathbb B^4}$ is the slice regular  Bergman Kernel of $\mathcal A_{\bf i}(\mathbb B^4 )$.

And, in addition, we introduce the Toeplitz operators in $\mathcal A_{\bf i}(\mathbb B^4 )$ extending    the concept of Toeplitz operator in the  complex Bergman theory  given  in    \cite{ACM}.  Consider  $\alpha\in C(\overline{\mathbb B^4_{\bf i}},\mathbb H)$ and   as the quaternionic product is non conmutative we have two kinds of Toepliz  operators associated to the slice regular Bergman space $\mathcal A_{{\bf i}}(\mathbb B^4 )$.  The left-Toeplitz operator, or only Toeplitz operator, with symbol $\alpha$ is given by  $T^{\bf i}_{\alpha} f =B_{\bf i}[  \alpha  f  ]$ for all $f  \in L$, i.e., 
$$(T^{\bf i}_{\alpha} f ) (q)= \int_{\mathbb B^4_{{\bf i}}} \mathcal K_{\mathbb B^4}(q,z)
  \alpha(z)  f(z)   d\sigma_{{\bf i}}, 
 \quad \forall q  \in \mathbb B^4  $$    and the
 the right-Toeplitz operator  with symbol $\alpha$ is given by $T^{\bf i}_{r,\alpha} f     =B_{\bf i}[  f \alpha]$ for all $f  \in L$.   
\end{definition}

\subsection{On fiber bundle theory}\label{Subse2.2}

The following basic concepts about fiber bundle theory are given in   \cite{Bredon}.

\begin{definition} \label{def1598} A fiber bundle 
    $(X, {\bf{P}}, B, F)$ consists of 
   Hausdorff spaces    $X$,  $B$,   $F$
        and    a topological action group  $K$ on $F$ as a group of homeomorphisms. The continuous surjective map  
 ${\bf{P}}: X\to B$ is the  bundle projection and each   element of $ B$ has  a neighborhood $U\subset B$    and 
   a homeomorphism  called trivialization $\varphi : U\times F\to {\bf{P}}^{-1}(U)$    such that ${\bf{P}}\circ \varphi (x, y) = x$ for all $x\in U$ and   $y\in F$. 
	In addition,  the family of  all   triviali\-za\-tions     $\Phi$ satisfies the following: 
	 \begin{enumerate} 
\item 	If $\varphi:U\times F\to {\bf P}^{-1}(U)$ belongs to  $\Phi$ and    $V\subset U$ 
	then     $\varphi_{\mid_{V\times F}}$ belongs to $\Phi$. 
	\item  
 If $\varphi,  \phi\in\Phi$   over $U\subset B$  then there exists a map $\psi :U\to K$ such that 
$ \phi (x,y) = \varphi ( x, \psi(u) (y) )$ 
for all $x\in U$ and $y\in F$. 
\item   $\Phi$ is a maximal family.
\end{enumerate}
A continuous map  $S: B\to X$ is a  section of $(X, {\bf{P}}, B, F)$,  if ${\bf P}\circ S(x)=x$ for all $x\in B$. An 
 arbitrary map $\mathfrak K:A\to B$  generates a  fiber bundle $(\mathfrak K^*(X), {\bf P}', A, F)$   called  the induced bundle from ${\bf{P}}$ by $\mathfrak K$, or pullback  bundle,  where 
 $\mathfrak K^*(X) := \{  (a,x)\in A\times X \ \mid \ {\bf P}(x)= \mathfrak K(a)     \} $   and    $ {\bf P}'(a,x)= a$ for all  $(a,x)\in \mathfrak K^*(X)$. 
If $F$ and $K$ are  the   sphere and  
 a subgroup of  the orthogonal group of an Euclidean  space respectively, then 
$(X, {\bf{P}}, B, F)$  is called     sphere bundle.
 In addition,  $(X, {\bf{P}}, B, F)$  is called a coordinate bundle if 
     the  mappings  $ \varphi_{v,x}:  F \to {\bf P}^{-1} (x) $  given by  $\varphi_{v,x}(y) =   \varphi_{v}(x,y )$ where  $\varphi_{v}$  is a trivialization and  $(x,y)\in U\times F$  define the  transition functions   $g_{v,w}(x)=\varphi_{v,x}^{-1}\circ \varphi_{w,x}$ that  satisfy:    
$g_{v,w}g_{w,l}  = g_{v,l} $,     
$g_{v,v}  = e $ and  $[g_{v,w} ]^{-1} = g_{w,v}$,
 where $e$ is the identity element.

 Let 	$ (X_1, {\bf P}_1, B_1, F_1)$ and  $  (X_2, {\bf P}_2, B_2, F_2)$ be two fiber bundles then   a  morphism  
  $\Gamma : (X_1, {\bf P}_1, B_1, F_1) \to   (X_2, {\bf P}_2, B_2, F_2)$    is a
pair of  continuous maps  $\Gamma_1: X_1 \to X_2$ and $  \Gamma_2 : B_1 \to  B_2$  such that ${\bf P}_2\circ \Gamma_1 = \Gamma_2 \circ {\bf P_1}:X_1 \to B_2$.
 In addition,  $\Gamma$ is a isomorphism if there exists a 
   morphism   $\Gamma^{-1} : (X_2, {\bf P}_2, B_2, F_2) \to   (X_1, {\bf P}_1, B_1, F_1)$  such  that  
$\Gamma\circ \Gamma^{-1}$ and $\Gamma^{-1}\circ \Gamma$ are the identity morphisms. 
  \end{definition}

\section{Main results}\label{S3}

 We will define the Hausdorff spaces and the continuous mappings required to  explain  the quaternionic  slice regular Bergman space  in terms of   the  fiber   bundle theory.

\begin{definition}\label{def1} The set   $HL^2( D)$  is formed by  pairs of conjugated harmonic functions  $(a,b)$  on $D$ such that 
		$$  \|a\|_{L^{2}}^2:= \displaystyle \int_{ D } |a(x,y)|^2 d\mu_{x,y}= \int_{ D } |a|^2 d\mu  , \ \ \    \|b\|_{L^{2}}^2:=\displaystyle \int_{D} |b|^2 d\mu < \infty.$$
Set  
   $	\mathcal {HL}( D  ):= \left\{  \left( \left( \begin{array}{cc}   a  &  b   \\  c & d  \end{array}\right)  ,   ({\bf k},{\bf l  })  \right) \   \mid   \    (a,b), (c,d) \in   HL^2(D) , \  ({\bf k},{\bf l }) \in T \right\}
 $  
equipped with the following metric
   \begin{align*} 
    & d  \left(  \   \left( \left( \begin{array}{cc}   a  &  b   \\  c & d  \end{array}\right)  ,  ({\bf k},{\bf l })  \right)  , \left( \left( \begin{array}{cc}   r  & s    \\ t & u  \end{array}\right)  ,  ({\bf m},{\bf n })  \right)  \    \right) \\ 
     := &    \|a-r\|_{L^{2} }   + \|b-s\|_{L^{2}  }  +\|c-t\|_{L^{2} }    + \|d-u\|_{L^{2} }   +   \| ({\bf k},{\bf l })- ({\bf m},{\bf n })\|_{\mathbb  R^6},
\end{align*}
for all 
$ \left( \left( \begin{array}{cc}   a  &  b   \\  c & d  \end{array}\right)  ,  ({\bf k},{\bf l })  \right)  , \left( \left( \begin{array}{cc}   r  &  s   \\  t & u  \end{array}\right)  ,  ({\bf m},{\bf n })  \right) \in  \mathcal {HL}( D) $.

 \end{definition}

\begin{definition}\label{def122} Given ${\bf i}\in \mathbb S^2$  consider the    metric induced by 
 $\|\cdot\|_{\mathcal{A}_{\bf i}  (\mathbb B^4) } $  on 
$\mathcal{A}_{\bf i}  (\mathbb B^4 ) $. In addition,     $\mathcal{A}_{\bf i}  (\mathbb B^4 )   \times T$ is equipped with the following metric:  
$\rho_{ {\bf i} } \left( f, ({\bf k}, {\bf l }) ),  (g, ({\bf m}, {\bf n }) ) \right) =
\|f-g\|_{\mathcal{A}_{\bf i}  (\mathbb B^4 ) } + \|({\bf k}, {\bf l })-({\bf m}, {\bf n }) \|_{\mathbb R^6}$, 
for all    $(f, ({\bf k}, {\bf l }) ),  (g, ({\bf m}, {\bf n }) )  \in \mathcal{A}_{\bf i}  (\mathbb B^4 )   \times T$.

 \end{definition}

\begin{remark} Given $ \left( \left( \begin{array}{cc}   a  &  b   \\  c & d  \end{array}\right)  ,  ({\bf k},{\bf l })  \right) 
 \in  \mathcal {HL}(D) $.  From   Representation Theorem,  Splitting Lemma and Remark \ref{Rem11}  we have that  
 $P_{{\bf k},{\bf l }}[  a +b {\bf k} +c{\bf l }  + d {\bf k}{\bf l }       ]  \in \mathcal{SR}(\mathbb B^4)$  and 
$
  \| P_{{\bf k},{\bf l }}[  a +b {\bf k} +c{\bf l }  + d {\bf k}{\bf l }       ]\|_{ \mathcal{A}_{\bf i}(\mathbb B^4)}^2    
\leq   
2 \left( 
   \|a\|^2_{L^{2}}   + \|b\|^2_{L^{2}}    +\|c\|^2_{L^{2}}   + \|d\|^2_{L^{2}} 
 \right)$. 
\end{remark}

\begin{definition} Given ${\bf i}\in \mathbb S^2$ and using   the previous remark    
 the operator    	${ \mathcal{P}}_{\mathbb B^4} : \mathcal {HL}(D)   \to \mathcal{A}_{\bf i}(\mathbb B^4 )$  given by 
$
 { \mathcal{P}}_{\mathbb B^4 }   \left( \left( \begin{array}{cc}   a  &  b   \\  c & d  \end{array}\right)  ,  ({\bf k},{\bf l })  \right)    := P_{{\bf k},{\bf l }}[  a +b {\bf k} +c{\bf l }  + d {\bf k}{\bf l }       ]
 $ 
 is well defined. Given $ ({\bf k},{\bf l }) \in T$ denote  
$$\displaystyle 
{\bf S}_{{\bf k}, {\bf l }}[f]:= \left( \left( \begin{array}{cc}  \displaystyle   D_1[f, {\bf k}, {\bf l } ]  &  D_2[f,  {\bf k}, {\bf l }] \\   D_3[f, {\bf k}, {\bf l }  ]  &  D_4[f, {\bf k}, {\bf l }  ]    \end{array}\right)  ,   ( {\bf k}, {\bf l }  )  \right)       
,$$   
for  all $f\in  \mathcal{A}_{\bf i}(\mathbb B^4)$. In addition,     let  $U\subset \mathcal{A}_{\bf i}(\mathbb B^4)$ be a neighborhood and   $u\in \mathbb S^3$  
define       
  $ \varphi_u  [ f, ({\bf k}, {\bf l }) ] :=  \left( \left( \begin{array}{cc}  \displaystyle   D_1[f, u{\bf k}\bar u, u{\bf l }\bar u  ]  &  D_2[f, u{\bf k}\bar u, u{\bf l }\bar u  ] \\   D_3[f, u{\bf k}\bar u, u{\bf l }\bar u  ]  &  D_4[f, u{\bf k}\bar u, u{\bf l }\bar u  ]    \end{array}\right)  ,   (u{\bf k}\bar u, u{\bf l }\bar u )  \right)  $,    
for all $(f, ({\bf k}, {\bf l }))\in U\times T$.  
 
\end{definition}

\begin{proposition}Given 
$\left( \left( \begin{array}{cc}   a  &  b   \\  c & d  \end{array}\right)  ,  ({\bf i},{\bf j })  \right)  , \left( \left( \begin{array}{cc}   r  &  s   \\  t & u  \end{array}\right)  ,  ({\bf k},{\bf l })  \right) \in \mathcal {HL}( D)  $
 we have that  
\begin{align*} &  \|    P_{{\bf i}, {\bf j}}[  a +b {\bf i} +c{\bf j}  + d {\bf i}{\bf j}       ] -   P_{{\bf k},{\bf l}}[  r +  s {\bf k} + t {\bf l}  + u {\bf kl}      ] \|_{   \mathcal{A}_{\bf i}(\mathbb B^4  ) }   \\
\leq & 
8   \left( 1 +  \|        r \|_{  L^{2} }
+  \|        s \|_{  L^{2} }
 +\|        t  \|_{  L^{2} }
+\|        u \|_{  L^{2} } \right)  
 d  \left(  \   \left( \left( \begin{array}{cc}   a  &  b   \\  c & d  \end{array}\right)  ,  ({\bf i},{\bf j })  \right)  , \left( \left( \begin{array}{cc}   r  &  s   \\  t & u  \end{array}\right)  ,  ({\bf k},{\bf l })  \right)  \    \right). 
\end{align*}
\end{proposition}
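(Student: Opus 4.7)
The plan is to use the triangle inequality with the intermediate element $P_{{\bf i},{\bf j}}[r+s{\bf i}+t{\bf j}+u{\bf ij}]$ and bound the two resulting pieces separately. The intermediate is well-defined because the conjugate-harmonic condition on $(r,s)$ and $(t,u)$ is the Cauchy--Riemann system, which is independent of the imaginary unit chosen; hence $r+s{\bf i},t+u{\bf i}\in\textrm{Hol}(\mathbb B^{4}_{\bf i})$, and similarly over $\mathbb C({\bf k})$.

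For the first piece, linearity of $P_{{\bf i},{\bf j}}$ reduces the task to bounding $\|P_{{\bf i},{\bf j}}[(a-r)+(b-s){\bf i}+(c-t){\bf j}+(d-u){\bf ij}]\|_{\mathcal A_{\bf i}}$, which by the remark preceding the proposition is at most $\sqrt{2\bigl(\|a-r\|_{L^{2}}^{2}+\|b-s\|_{L^{2}}^{2}+\|c-t\|_{L^{2}}^{2}+\|d-u\|_{L^{2}}^{2}\bigr)}$. Using $\sqrt{2\sum x_i^{2}}\le\sqrt 2\sum x_i$ for $x_i\ge 0$, this is dominated by $\sqrt 2$ times the sum of the four $L^{2}$-norms of the differences, and in turn by $8\,d(\cdot,\cdot)$.

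For the second piece I compute the pointwise difference at $q=x+y{\bf I}_q\in\mathbb B^{4}$ directly from the definition of $P$. Expanding the products $\tfrac12(1\pm{\bf I}_q{\bf i})(r(x,\mp y)+s(x,\mp y){\bf i}+t(x,\mp y){\bf j}+u(x,\mp y){\bf ij})$ and the analogous expressions with $({\bf k},{\bf l})$, and using that $r,s,t,u$ are real-valued, the scalar coefficients of $1$ and of ${\bf I}_q$ do not depend on the imaginary units, so the difference collapses to a linear combination of the six factors $({\bf i}-{\bf k}),({\bf j}-{\bf l}),({\bf ij}-{\bf kl}),{\bf I}_q({\bf i}-{\bf k}),{\bf I}_q({\bf j}-{\bf l}),{\bf I}_q({\bf ij}-{\bf kl})$ with weights drawn from $\{r(x,\pm y),s(x,\pm y),t(x,\pm y),u(x,\pm y)\}$. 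Using $\|{\bf ij}-{\bf kl}\|\le\|{\bf i}-{\bf k}\|+\|{\bf j}-{\bf l}\|$ and $\|{\bf I}_q\|=1$, each factor has norm at most $2\delta$, where $\delta:=\|({\bf i},{\bf j})-({\bf k},{\bf l})\|_{\mathbb R^{6}}$.

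Squaring this pointwise bound, using $(\sum_{i=1}^{8}a_i)^{2}\le 8\sum a_i^{2}$, integrating over $(x,y)\in D$, and invoking the symmetry $y\mapsto -y$ of $D$ (so that $\int_D|r(x,-y)|^{2}d\mu_{x,y}=\|r\|_{L^{2}}^{2}$, and likewise for $s,t,u$) yields a bound of the form $C\delta(\|r\|_{L^{2}}+\|s\|_{L^{2}}+\|t\|_{L^{2}}+\|u\|_{L^{2}})$ for a numerical constant $C\le 8$. Adding the two pieces and combining $\sqrt{2}$ with $C$ into the common coefficient $8$ produces the claimed prefactor $8(1+\|r\|_{L^{2}}+\|s\|_{L^{2}}+\|t\|_{L^{2}}+\|u\|_{L^{2}})$ multiplied by $d(\cdot,\cdot)$. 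The main obstacle is the algebraic bookkeeping in the second piece: tracking which products of quaternionic imaginary units cancel and which combine into $({\bf i}-{\bf k})$ or $({\bf ij}-{\bf kl})$ contributions, while respecting non-commutativity and the sign changes induced by evaluating at $(x,-y)$.
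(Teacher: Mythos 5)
Your proof is correct and follows essentially the same strategy as the paper's: both reduce the estimate, via the triangle inequality, to (i) a term where only the harmonic data $a,b,c,d$ versus $r,s,t,u$ differ, controlled by the $L^{2}$-norms of the differences, and (ii) a term where only the units $({\bf i},{\bf j})$ versus $({\bf k},{\bf l})$ differ, controlled by $\|r\|_{L^{2}},\dots,\|u\|_{L^{2}}$ times $\|({\bf i},{\bf j})-({\bf k},{\bf l})\|_{\mathbb R^{6}}$, using the pointwise expansion of $P$ on the slice $\mathbb C({\bf i})$, the bound $\|{\bf ij}-{\bf kl}\|\le\|{\bf i}-{\bf k}\|+\|{\bf j}-{\bf l}\|$, and the $y\mapsto-y$ symmetry of $D$. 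Your packaging via the single intermediate element $P_{{\bf i},{\bf j}}[r+s{\bf i}+t{\bf j}+u{\bf ij}]$ and the appeal to the preceding remark is merely a cleaner organization of the same add-and-subtract chain the paper writes out explicitly, and your constants comfortably fit under the stated prefactor $8$.
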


\begin{proof} From    Representation Theorem and  Splitting Lemma   we have that 
\begin{align*} &  \|    P_{{\bf i}, {\bf j}}[  a +b {\bf i} +c{\bf j}  + d {\bf i}{\bf j}       ] -   P_{{\bf k},{\bf l}}[  r +  s {\bf k} + t {\bf l}  + u {\bf kl}      ] \|_{   \mathcal{A}_{\bf i}(\mathbb B^4  ) }   \\
\leq &  \|    \frac{1}{2} (1+{\bf i}{\bf i})\left( a +b  {\bf i} +c  {\bf j}  + d  {\bf i}{\bf j} \right)\circ \zeta    - \frac{1}{2}(1+{\bf i}{\bf k}) 
\left(   r +  s   {\bf k} + t  {\bf l}  + u  {\bf kl}     \right)\circ \zeta  \|_{  L^{2}( \mathbb B^4_{\bf i } , \mathbb H) } \\
+ & \|    \frac{1}{2} (1-{\bf i}{\bf i})\left( a  +b {\bf i} +c {\bf j}  + d  {\bf i}{\bf j} \right)    - \frac{1}{2}(1-{\bf i}{\bf k}) 
\left(   r  +  s  {\bf k} + t  {\bf l}  + u  {\bf kl}     \right)  \|_{  L^{2}( \mathbb B^4_{\bf i } , \mathbb H) } \\
\leq & \frac{1}{2} \left\{ \|    a  -r    \|_{  L^{2} }  
+ \| b-s   \|_{  L^{2}  } 
  + \|  c-t  \|_{  L^{2}  }  
 + \| d -u    \|_{  L^{2} } \right\}\\
& +\frac{1}{2} \left\{  \|        s  \|_{  L^{2} }  \  \|{\bf i}- {\bf k}\|_{\mathbb H}  +
+ \|t  \|_{  L^{2} }  \  \|{\bf j}- {\bf l}  \|_{\mathbb H} +    \|u   \|_{  L^{2}  }  \  \| 
{\bf i}{\bf j} - {\bf kl}\|_{\mathbb H}
  \right\} \\
& +
  \frac{1}{2} \left\{ \|    
    a- r   \|_{  L^{2} }  +  \|  b- s     \|_{  L^{2}  }  +  \| c- t \|_{  L^{2}  } + 
     \| d- u  \|_{  L^{2}  } \right\}\\
&+
 \frac{1}{2}\left\{ \|    
   r  \|_{  L^{2} } \  \|{\bf  i} -{\bf k}\|_{\mathbb H} +\|  s \|_{  L^{2}  } \    \| {\bf i} {\bf i} -{\bf k} {\bf k}\|_{\mathbb H} 
   + \| t  \|_{  L^{2} }  \ 
	\| {\bf i} {\bf j} - {\bf k} {\bf l}\|_{\mathbb H}  + \| u  \|_{  L^{2} }  \ 
	\|{\bf i} {\bf i}{\bf j}   - {\bf k}{\bf kl} \|_{ \mathbb H}
  \right\}\\
  &
 +  \frac{1}{2} \left\{ \|       a  -  r   \|_{  L^{2} }    
+ \| b-s \|_{  L^{2}  }   
  + \| c - t  \|_{  L^{2}  }    
  + \|d- u   \|_{  L^{2}  } \right\}\\
 &
 +  \frac{1}{2}\left\{ \|       
 s \|_{  L^{2}  } \    \|  {\bf i}-   {\bf k}\|_{\mathbb H} 
 + \| t \|_{  L^{2} }  \  \| {\bf j} 
-  {\bf l}  \|_{\mathbb H} + \| u  \|_{  L^{2}  } \   \|
     {\bf i}{\bf j} -   {\bf kl}   \|_{    \mathbb H  } \right\}\\
&  + \frac{1}{2} \left\{\|    
    r-a \|_{  L^{2} }   
	+ \| s -  b \|_{  L^{2}  }    + 
	\| t - c \|_{  L^{2} }   
	+ \|  u -  d\|_{  L^{2}  }\} \right\}\\
&
+ \frac{1}{2}\left\{ \|    
       r  \|_{  L^{2} }  \  \| {\bf k}- {\bf i}\|_{\mathbb H}
			+  \| s \|_{  L^{2} }  \   
			\| {\bf k} {\bf k}- {\bf i} {\bf i}\|_{\mathbb H}  +
 \|t  \|_{  L^{2}  } \   \|
 {\bf k} {\bf l}- {\bf i}{\bf j}\|_{\mathbb H}  +\|  u  \|_{  L^{2}  } \   
 \|{\bf k}{\bf kl}  -{\bf i} {\bf i}{\bf j}\|_{\mathbb H} \right\}  \\ 
\leq & 
8   \left( 1 +  \|       r \|_{  L^{2} }
+  \|       s  \|_{  L^{2} }
 +\|        t  \|_{  L^{2} }
+\|        u  \|_{  L^{2} } \right)  
 d  \left(  \   \left( \left( \begin{array}{cc}   a  &  b   \\  c & d  \end{array}\right)  ,  ({\bf i},{\bf j })  \right)  , \left( \left( \begin{array}{cc}   r  &  s   \\  t  & u  \end{array}\right)  ,  ({\bf k},{\bf l })  \right)  \    \right),  
\end{align*}
where $\zeta (q)= \bar q$ for all $q\in \mathbb H$ and the   relationships   
 $\|      a  \circ \zeta  \|_{  L^{2}( \mathbb B^4_{\bf i } , \mathbb H) }=
 \|      a    \|_{  L^{2}( \mathbb B^4_{\bf i } , \mathbb H) } =   \|    a  \|_{  L^{2} }  $
  and  $  \|
 {\bf k} {\bf l}- {\bf i}{\bf j}\|_{\mathbb H}   \leq  2 \left( \|
 {\bf k}  - {\bf i} \|_{\mathbb H} + 
  \|
  {\bf l} -  {\bf j}\|_{\mathbb H} \right)  \leq 4 \|({\bf i}, {\bf j})-({\bf k}, {\bf l}) \|_{\mathbb R^6} 
 $ 
were used.

\end{proof}

\begin{corollary}\label{BundleProje}
   	${ \mathcal{P}}_{\mathbb B^4 } : \mathcal {HL}( D )   \to \mathcal{A}_{\bf i}(\mathbb B^4 )$
is a continuous surjective operator. 
\end{corollary}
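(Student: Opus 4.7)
The plan is to treat continuity and surjectivity separately, since both are essentially short consequences of material already established in the excerpt.

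For continuity, I would simply invoke the preceding proposition. Fixing any target point $X_0 = \left(\left(\begin{smallmatrix} r & s \\ t & u \end{smallmatrix}\right), ({\bf k}, {\bf l})\right) \in \mathcal{HL}(D)$, the estimate proved there reads
\[
\|\mathcal{P}_{\mathbb{B}^4}(X) - \mathcal{P}_{\mathbb{B}^4}(X_0)\|_{\mathcal{A}_{\bf i}(\mathbb{B}^4)} \leq 8\bigl(1 + \|r\|_{L^{2}} + \|s\|_{L^{2}} + \|t\|_{L^{2}} + \|u\|_{L^{2}}\bigr)\, d(X, X_0),
\]
so $\mathcal{P}_{\mathbb{B}^4}$ is Lipschitz at every point of $\mathcal{HL}(D)$ with a constant that is finite at that point, and therefore continuous globally.

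For surjectivity, the strategy is to exhibit an explicit right inverse supplied by the map ${\bf S}_{{\bf k},{\bf l}}$ introduced just above the proposition. Fix any $({\bf k},{\bf l})\in T$ and let $f\in\mathcal{A}_{\bf i}(\mathbb{B}^4)$; by Remark~\ref{Rem11}, $f$ also lies in $\mathcal{A}_{\bf k}(\mathbb{B}^4)$ as a set. The Splitting Lemma gives $Q_{{\bf k},{\bf l}}[f]=f_1+f_2{\bf l}$ with $f_1,f_2\in\textrm{Hol}(\mathbb{B}^4_{\bf k})$, and splitting each $f_s$ into real and ${\bf k}$-imaginary parts identifies the real components $D_j[f,{\bf k},{\bf l}]$ for $j=1,\dots,4$. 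The key point to verify is that ${\bf S}_{{\bf k},{\bf l}}[f]\in\mathcal{HL}(D)$: conjugate-harmonicity of each row $(D_1,D_2)$ and $(D_3,D_4)$ follows from holomorphy of $f_1$ and $f_2$ on the slice $\mathbb{C}({\bf k})$ via the usual Cauchy--Riemann identification $\mathbb{B}^4_{\bf k}\approx D$, while $L^2$-integrability of each $D_j$ follows from the pointwise identity $\|f\|_{\mathcal{A}_{\bf k}(\mathbb{B}^4)}^2=\sum_{j=1}^{4}\|D_j[f,{\bf k},{\bf l}]\|_{L^{2}}^2$ together with $f\in\mathcal{A}_{\bf k}(\mathbb{B}^4)$. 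Finally, applying the identity $P_{{\bf k},{\bf l}}\circ Q_{{\bf k},{\bf l}}=\mathcal{I}_{\mathcal{SR}(\mathbb{B}^4)}$ recalled in the preliminaries yields
\[
\mathcal{P}_{\mathbb{B}^4}\bigl({\bf S}_{{\bf k},{\bf l}}[f]\bigr) = P_{{\bf k},{\bf l}}\bigl[Q_{{\bf k},{\bf l}}[f]\bigr] = f.
\]

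I do not anticipate any genuine obstacle: both parts reduce to invocations of the Splitting Lemma, the Representation Formula, the relation $P\circ Q=\mathcal{I}$, and the Lipschitz bound proved just above. The only mild bookkeeping is to observe that the real components coming out of $Q_{{\bf k},{\bf l}}[f]$ naturally organize into two conjugate-harmonic pairs, which is immediate from the classical Cauchy--Riemann equations inside the slice $\mathbb{C}({\bf k})$.
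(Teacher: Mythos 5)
Your proposal is correct and matches the paper's (largely implicit) argument: continuity is exactly the Lipschitz estimate of the preceding proposition, and surjectivity via the right inverse ${\bf S}_{{\bf k},{\bf l}}$ together with $P_{{\bf k},{\bf l}}\circ Q_{{\bf k},{\bf l}}=\mathcal I_{\mathcal{SR}(\mathbb B^4)}$ is precisely the identity $\mathcal P_{\mathbb B^4}\circ {\bf S}_{{\bf i},{\bf j}}[f]=f$ that the paper records later in the proof of the coordinate-bundle proposition. Your verification that ${\bf S}_{{\bf k},{\bf l}}[f]$ actually lands in $\mathcal{HL}(D)$ (conjugate-harmonic rows plus $L^2$ bounds via Remark~\ref{Rem11}) is a detail the paper leaves unstated, and it is correctly handled.
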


\begin{proposition}  If 
$  (f,  ({\bf i}, {\bf j }) ),  (g,  ({\bf k}, {\bf l }) )  \in   \mathcal{A}_{\bf i}(\mathbb B^4 )  \times T$ then  
\begin{align*}
& d \left(  \   \left( \left( \begin{array}{cc}  \displaystyle   D_1[f, {\bf i}, {\bf j} ]  &  D_2[f,  {\bf i}, {\bf j }] \\   D_3[f, {\bf i}, {\bf j }  ]  &  D_4[f, {\bf i}, {\bf j}  ]    \end{array}\right)  ,   ( {\bf i}, {\bf j }  )  \right) ,  \left( \left( \begin{array}{cc}  \displaystyle   D_1[g, {\bf k}, {\bf l} ]  &  D_2[g,  {\bf k}, {\bf l }] \\   D_3[g, {\bf k}, {\bf l }  ]  &  D_4[g, {\bf k}, {\bf l}  ]    \end{array}\right)  ,   ( {\bf k}, {\bf l }  )  \right) \ \right) \\
  \leq &
 \   4  \left(  1+ (1+ \sqrt{2}) \| g \|_{   \mathcal{A}_{\bf i}(\mathbb B^4 ) }   \right)  \  \rho_{\bf i}\left(  ( f ,   ({\bf i}, {\bf j}  )  ) , ( g,   ( {\bf k}, {\bf l }) ) \right)    .
\end{align*}

\end{proposition}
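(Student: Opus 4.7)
The strategy is to expand the metric $d$ on $\mathcal{HL}(D)$ into its four $L^{2}$-components plus the $\mathbb{R}^{6}$-piece, and for each $L^{2}$-component insert an intermediate term $D_{m}[g,{\bf i},{\bf j}]$ by the triangle inequality. The $\mathbb{R}^{6}$-term $\|({\bf i},{\bf j})-({\bf k},{\bf l})\|_{\mathbb{R}^{6}}$ appears identically in $d$ and in $\rho_{\bf i}$, so it is absorbed into $\rho_{\bf i}$ immediately. The split
\[
  D_{m}[f,{\bf i},{\bf j}]-D_{m}[g,{\bf k},{\bf l}]
  =\bigl(D_{m}[f,{\bf i},{\bf j}]-D_{m}[g,{\bf i},{\bf j}]\bigr)+\bigl(D_{m}[g,{\bf i},{\bf j}]-D_{m}[g,{\bf k},{\bf l}]\bigr)
\]
cleanly separates the $f$-dependence from the basis-change dependence.

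For the first summand, each $D_{m}[\,\cdot\,,{\bf i},{\bf j}]$ is an $\mathbb{R}$-linear extraction of a coordinate of $Q_{{\bf i},{\bf j}}[\,\cdot\,]$ in the orthonormal basis $\{1,{\bf i},{\bf j},{\bf ij}\}$, which gives the pointwise bound $|D_{m}[f-g,{\bf i},{\bf j}](x+{\bf i}y)|\leq|(f-g)(x+{\bf i}y)|$. Integrating over $D$ yields $\|D_{m}[f,{\bf i},{\bf j}]-D_{m}[g,{\bf i},{\bf j}]\|_{L^{2}(D)}\leq\|f-g\|_{\mathcal{A}_{\bf i}(\mathbb{B}^{4})}$, and summing over $m=1,\ldots,4$ produces exactly the factor of $4$ attached to $\|f-g\|_{\mathcal{A}_{\bf i}(\mathbb{B}^{4})}$ in the target bound.

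For the second summand, I would apply the Representation Formula to $g$, which gives the identity
\[
  g(x+{\bf i}y)-g(x+{\bf k}y)=\tfrac{1}{2}(1+{\bf i}{\bf k})\bigl[g(x-{\bf k}y)-g(x+{\bf k}y)\bigr].
\]
The algebraic fact $|1+{\bf i}{\bf k}|=|{\bf i}-{\bf k}|$ (both equal $\sqrt{2-2\,{\bf i}\cdot{\bf k}}$ since ${\bf i},{\bf k}$ are pure unit quaternions), the change of variable $y\mapsto-y$, and Remark \ref{Rem11} together give $\|g(\,\cdot\,+{\bf i}\,\cdot\,)-g(\,\cdot\,+{\bf k}\,\cdot\,)\|_{L^{2}(D)}\leq\sqrt{2}\,\|g\|_{\mathcal{A}_{\bf i}(\mathbb{B}^{4})}\,\|{\bf i}-{\bf k}\|_{\mathbb{H}}$. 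Each $D_{m}[g,{\bf i},{\bf j}]-D_{m}[g,{\bf k},{\bf l}]$ is pointwise controlled by $|g(x+{\bf i}y)\mathfrak{a}_{m}-g(x+{\bf k}y)\mathfrak{b}_{m}|$ with $\mathfrak{a}_{m},\mathfrak{b}_{m}$ drawn from $\{1,{\bf i},{\bf j},{\bf ij}\}$ and $\{1,{\bf k},{\bf l},{\bf kl}\}$; adding and subtracting $g(x+{\bf k}y)\mathfrak{a}_{m}$ separates this into the previous $L^{2}$ bound plus a term $|g(x+{\bf k}y)|\,|\mathfrak{a}_{m}-\mathfrak{b}_{m}|$, where $|{\bf ij}-{\bf kl}|\leq|{\bf i}-{\bf k}|+|{\bf j}-{\bf l}|$ reduces everything to the two elementary norms.

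Collecting the four contributions and bounding $|{\bf i}-{\bf k}|$, $|{\bf j}-{\bf l}|$ by $\|({\bf i},{\bf j})-({\bf k},{\bf l})\|_{\mathbb{R}^{6}}$ (with the Cauchy--Schwarz step $|{\bf i}-{\bf k}|+|{\bf j}-{\bf l}|\leq\sqrt{2}\,\|({\bf i},{\bf j})-({\bf k},{\bf l})\|_{\mathbb{R}^{6}}$ wherever needed) produces the coefficient $4(1+\sqrt{2})\|g\|_{\mathcal{A}_{\bf i}(\mathbb{B}^{4})}$ multiplying $\|({\bf i},{\bf j})-({\bf k},{\bf l})\|_{\mathbb{R}^{6}}$. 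Together with the $4\|f-g\|_{\mathcal{A}_{\bf i}(\mathbb{B}^{4})}$ from step two and the trivial $\mathbb{R}^{6}$-term, this assembles the claimed inequality. The proof is conceptually routine; the main obstacle is purely constant-tracking, since the coefficient $4(1+(1+\sqrt{2})\|g\|_{\mathcal{A}_{\bf i}(\mathbb{B}^{4})})$ is tight enough that Cauchy--Schwarz on the basis differences is required instead of the cruder bound $\leq 2\,\|({\bf i},{\bf j})-({\bf k},{\bf l})\|_{\mathbb{R}^{6}}$. The structure parallels the preceding proposition very closely.
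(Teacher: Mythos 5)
Your proposal is correct and follows essentially the same route as the paper: the triangle-inequality split into the $f$-versus-$g$ piece, the slice-change piece for $g$, and the basis-difference piece, with the slice-change term handled exactly as in the paper via the Representation Formula identity $g(x+{\bf i}y)-g(x+{\bf k}y)=\tfrac{1}{2}(1+{\bf i}{\bf k})[g(x-{\bf k}y)-g(x+{\bf k}y)]$ and Remark \ref{Rem11}, yielding the $\sqrt{2}\,\|{\bf i}-{\bf k}\|_{\mathbb H}\,\|g\|_{\mathcal A_{\bf i}(\mathbb B^4)}$ bound. Your component-by-component bookkeeping just makes explicit the ``similar computations to the previous proof'' that the paper leaves implicit, and the constants assemble to the stated coefficient.
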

\begin{proof}
From Representation Formula,  Splitting Lemma and similar  computations to the previous proof we can conclude   that 
\begin{align*}
& d \left(  \   \left( \left( \begin{array}{cc}  \displaystyle   D_1[f, {\bf i}, {\bf j} ]  &  D_2[f,  {\bf i}, {\bf j }] \\   D_3[f, {\bf i}, {\bf j }  ]  &  D_4[f, {\bf i}, {\bf j}  ]    \end{array}\right)  ,   ( {\bf i}, {\bf j }  )  \right) ,  \left( \left( \begin{array}{cc}  \displaystyle   D_1[g, {\bf k}, {\bf l} ]  &  D_2[g,  {\bf k}, {\bf l }] \\   D_3[g, {\bf k}, {\bf l }  ]  &  D_4[g, {\bf k}, {\bf l}  ]    \end{array}\right)  ,   ( {\bf k}, {\bf l }  )  \right) \ \right) \\
\leq &
 4 \left[ \left( \int_{D}\| f(x+{\bf i}y )  - g(x+{\bf i} y)\|^2d\mu_{x,y} \right)^{\frac{1}{2}} \right. \\
  &  + \left(  \int_{D}\| g(x+{\bf i}y )  - g(x+{\bf k} y)\|^2d\mu_{x,y} \right)^{\frac{1}{2}}   \\
& \left.+ \left( \int_{D}\| g(x+{\bf i} y)\|^2d\mu_{x,y} \right)^{\frac{1}{2}}     \| ({\bf i}, {\bf j}) -  ({\bf k}, {\bf l })  \|_{\mathbb R^6} \right]   
+  \|   ({\bf i}, {\bf j}  ) -  ( {\bf k}, {\bf l })  \|_{\mathbb B^6}
\\
 \leq &
 4  (  1+  \| g \|_{   \mathcal{A}_{\bf i}(D ) }  )  \  \rho_{\bf i}\left(  ( f ,   ({\bf i}, {\bf j}  )  ) , ( g,   ( {\bf k}, {\bf l }) ) \right)  
+4 \left(  \int_{ D }\| g(x+{\bf i}y )  - g(x+{\bf k} y)\|^2d\mu_{x,y} \right)^{\frac{1}{2}}    .
\end{align*} 
Given  $(x,y)\in D$ from Remark \ref{Rem11} and  Representation Formula we get    
\begin{align*}
g(x+{\bf i}y )  - g(x+{\bf k} y) = \frac{1}{2} \left[ ({\bf i}- {\bf k}) {\bf k} g(x+{\bf k}y) +  ({\bf i}- {\bf k}) {\bf k} g(x-{\bf k}y)  \right].
\end{align*}  Therefore,
$\left(  \int_{D}\| g(x+{\bf i}y )  - g(x+{\bf k} y)\|^2d\mu_{x,y} \right)^{\frac{1}{2}}   \leq 
 \sqrt{2} \|  {\bf i}- {\bf k} \|_{\mathbb H}  \| g\|_{  \mathcal{A}_{\bf i}(\mathbb B^4  ) }   .
$ and the two  previous inequalities give us the main result.
\end{proof}

\begin{corollary}\label{SecTrivia}
Given $({\bf i}, {\bf j })\in T$ and  $ u \in \mathbb S^3$ then  $ 
{\bf S}_{{\bf i}, {\bf j }}$ and of $ \varphi_u  $ are continuous operators.
\end{corollary}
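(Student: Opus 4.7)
The plan is to deduce both continuity assertions directly from the quantitative bound established in the preceding Proposition, which already does the heavy lifting: there is nothing new to estimate, only two specializations to record.

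For ${\bf S}_{{\bf i}, {\bf j}}$, I would fix $({\bf i}, {\bf j}) \in T$ and apply the Proposition with $({\bf k}, {\bf l}) = ({\bf i}, {\bf j})$. The term $\|({\bf i}, {\bf j}) - ({\bf k}, {\bf l})\|_{\mathbb{R}^6}$ vanishes, so $\rho_{\bf i}$ collapses to $\|f - g\|_{\mathcal{A}_{\bf i}(\mathbb{B}^4)}$ and the inequality reduces to the local Lipschitz estimate
\[
d({\bf S}_{{\bf i}, {\bf j}}[f], {\bf S}_{{\bf i}, {\bf j}}[g]) \leq 4\left(1 + (1+\sqrt{2})\|g\|_{\mathcal{A}_{\bf i}(\mathbb{B}^4)}\right)\|f - g\|_{\mathcal{A}_{\bf i}(\mathbb{B}^4)},
\]
which suffices for continuity at every $g \in \mathcal{A}_{\bf i}(\mathbb{B}^4)$.

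For $\varphi_u$, I would begin with the identity $\varphi_u[f, ({\bf k}, {\bf l})] = {\bf S}_{R_u({\bf k}, {\bf l})}[f]$ where $R_u({\bf k}, {\bf l}) = (u{\bf k}\bar u, u{\bf l}\bar u)$. Two short facts are needed before invoking the previous Proposition: first, that $R_u$ maps $T$ into $T$, which follows because conjugation by $u \in \mathbb{S}^3$ is an orientation-preserving rotation of $\mathbb{R}^3$, so the triple $\{u{\bf k}\bar u, u{\bf l}\bar u, u{\bf k}{\bf l}\bar u\}$ remains co-oriented with $\{e_1, e_2, e_3\}$; and second, that $R_u$ acts as an isometry on $\mathbb{R}^6$, since $\|u{\bf k}\bar u - u{\bf m}\bar u\|_{\mathbb{H}} = \|{\bf k} - {\bf m}\|_{\mathbb{H}}$. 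Applying the Proposition to the rotated pairs then yields
\[
d(\varphi_u[f, ({\bf k}, {\bf l})], \varphi_u[g, ({\bf m}, {\bf n})]) \leq C(g)\left(\|f - g\|_{\mathcal{A}_{\bf i}(\mathbb{B}^4)} + \|({\bf k}, {\bf l}) - ({\bf m}, {\bf n})\|_{\mathbb{R}^6}\right),
\]
with $C(g) = 4(1 + (1+\sqrt{2})\|g\|_{\mathcal{A}_{\bf i}(\mathbb{B}^4)})$, which is precisely control by $\rho_{\bf i}$ up to a constant depending on the base point.

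Both steps are essentially bookkeeping on top of the preceding Proposition. The only mildly non-routine check is verifying that $R_u$ preserves the co-orientation condition defining $T$ so that the range of $\varphi_u$ actually sits inside $\mathcal{HL}(D)$; I expect this to be the main point requiring explicit mention, but it follows from standard properties of the conjugation action of $\mathbb{S}^3$ on $\mathbb{R}^3$.
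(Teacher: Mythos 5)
Your proposal is correct and follows the same route the paper intends: the corollary is stated as an immediate consequence of the preceding Proposition, and your two specializations (taking $({\bf k},{\bf l})=({\bf i},{\bf j})$ for ${\bf S}_{{\bf i},{\bf j}}$, and composing with the isometry $R_u$ of $T$ for $\varphi_u$) are exactly the bookkeeping needed. The observations that $R_u$ preserves the co-orientation condition and acts isometrically on $\mathbb{R}^6$ are the right details to make explicit.
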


\begin{proposition}
Consider  $({\bf i}, {\bf j})\in T$. Then       $(\mathcal {HL}(D),  \mathcal{P}_{\mathbb B^4 } ,\mathcal{A}_{\bf i}(\mathbb B^4 ), T  )$  is a coordinate  sphere  bundle where $K=\{R_v  \mid v \in \mathbb S^3 \}$ is the structure group,    $\Phi =\{ \varphi_v    \mid  v\in\mathbb S^3\}$ is the  family of trivializations  and  the operator ${\bf S}_{{\bf i}, {\bf j}}$ is a section.
\end{proposition}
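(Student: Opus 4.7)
The plan is to verify the axioms of Definition~\ref{def1598} one at a time for the quadruple $(\mathcal{HL}(D), \mathcal P_{\mathbb B^4}, \mathcal A_{\bf i}(\mathbb B^4), T)$.

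First I would gather the topological prerequisites. All three spaces $\mathcal{HL}(D)$, $\mathcal A_{\bf i}(\mathbb B^4)$ and $T$ are metric, hence Hausdorff, by Definitions~\ref{def1} and~\ref{def122} and the induced norm on $\mathcal A_{\bf i}(\mathbb B^4)$. The fiber $T$ is the manifold of positively oriented orthonormal frames of $\mathbb R^3$, and $K = \{R_v \mid v \in \mathbb S^3\}$ acts on $T$ through the simultaneous conjugation $R_v({\bf k}, {\bf l}) = (v {\bf k} \bar v, v {\bf l} \bar v)$, which preserves the Euclidean norm on $\mathbb R^6$ and therefore sits inside the corresponding orthogonal group. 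Continuity and surjectivity of the bundle projection $\mathcal P_{\mathbb B^4}$ have already been established in Corollary~\ref{BundleProje}.

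Next I would check that each $\varphi_u$ is a local trivialization. For any open $U \subset \mathcal A_{\bf i}(\mathbb B^4)$, the restriction $\varphi_u : U \times T \to \mathcal P_{\mathbb B^4}^{-1}(U)$ is continuous by Corollary~\ref{SecTrivia}. The projection identity $\mathcal P_{\mathbb B^4} \circ \varphi_u[f, ({\bf k}, {\bf l})] = f$ unpacks, upon inserting the definitions of the $D_m$'s, to
\[
P_{u {\bf k} \bar u,\, u {\bf l} \bar u}\bigl[Q_{u {\bf k} \bar u,\, u {\bf l} \bar u}[f]\bigr] = f,
\]
which is the identity $P \circ Q = \mathcal I_{\mathcal{SR}(\Omega)}$ from the preliminaries. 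A continuous inverse is obtained by sending an element $(M, ({\bf k}', {\bf l}')) \in \mathcal P_{\mathbb B^4}^{-1}(U)$ to $(\mathcal P_{\mathbb B^4}(M, ({\bf k}', {\bf l}')),(\bar u {\bf k}' u, \bar u {\bf l}' u))$, making $\varphi_u$ a homeomorphism.

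The coordinate bundle structure then follows from a direct computation showing that, for $v, w \in \mathbb S^3$,
\[
\varphi_v^{-1} \circ \varphi_w[f, ({\bf k}, {\bf l})] = \bigl(f,\, R_{\bar v w}({\bf k}, {\bf l})\bigr),
\]
so that the transition function $g_{v, w}$ is constantly equal to $R_{\bar v w} \in K$. The cocycle identities then reduce to the quaternion identities $\bar v w \cdot \bar w l = \bar v l$, $\bar v v = 1$ and $\overline{\bar v w} = \bar w v$, which yield $g_{v, w} g_{w, l} = g_{v, l}$, $g_{v, v} = \mathrm{id}$ and $g_{v, w}^{-1} = g_{w, v}$; closure under restriction and maximality of $\Phi$ are formal. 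Finally, that ${\bf S}_{{\bf i}, {\bf j}}$ is a section is immediate, for the $D_m[f, {\bf i}, {\bf j}]$'s are by definition the components of $Q_{{\bf i}, {\bf j}}[f]$ in the basis $\{1, {\bf i}, {\bf j}, {\bf ij}\}$, whence
\[
\mathcal P_{\mathbb B^4}\bigl({\bf S}_{{\bf i}, {\bf j}}[f]\bigr) = P_{{\bf i}, {\bf j}} \circ Q_{{\bf i}, {\bf j}}[f] = f.
\]

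The main technical point is the transition function computation: one must verify that feeding $R_{\bar v w}({\bf k}, {\bf l})$ into $\varphi_v$ reproduces precisely the basis $(w {\bf k} \bar w, w {\bf l} \bar w)$ used by $\varphi_w$, which rests on the telescoping identity $v \cdot \bar v w = w$ and its conjugate. Once this rotation bookkeeping is in hand, the remaining axioms are immediate corollaries of $P \circ Q = \mathcal I$ and of the group law in $\mathbb S^3$.
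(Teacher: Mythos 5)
Your proposal is correct and follows essentially the same route as the paper: continuity and surjectivity of $\mathcal P_{\mathbb B^4}$ from Corollary~\ref{BundleProje}, continuity of $\varphi_v$ and ${\bf S}_{{\bf i},{\bf j}}$ from Corollary~\ref{SecTrivia}, the transition functions computed as $g_{v,w}=R_{\bar v w}$ via the same telescoping of conjugations, and the section property reduced to $P_{{\bf i},{\bf j}}\circ Q_{{\bf i},{\bf j}}=\mathcal I$. The only difference is that where the paper outsources the verification that the $\varphi_v$ are trivializations to a citation of \cite[Proposition 3.6]{O1}, you check it directly (explicit continuous inverse, cocycle identities), which is a harmless elaboration rather than a different argument.
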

\begin{proof}
From Corollary \ref{BundleProje}   we see  that     	${ \mathcal{P}}_{\mathbb B^4 } : \mathcal {HL}( D)   \to \mathcal{A}_{\bf i}(\mathbb B^4 )$
is a continuous surjective operator and   Corollary 
\ref{SecTrivia}  shows that  ${\bf S}_{{\bf i}, {\bf j }}$ and  $ \varphi_v  $ are continuous operators.
The set  $K=\{R_v \ \mid \  v\in \mathbb S^3\}$ equipped with the composition is a group  of homeomorphisms on $T$ and 
     $\{ \varphi_v \ \mid \ v\in \mathbb S^3 \}$  is the family of trivializations, see    \cite[Proposition 3.6]{O1} since  our functions 
     are axially symmetric   slice regular functions.
The transition functions are given by  
	\begin{align*}
g_{v,w}(f)  =   \varphi_{v, f}^{-1}\circ \varphi_{w, f}({\bf i}, {\bf j})=  \varphi_{v,f}^{-1}\circ \varphi_{v}( f,  R_{\bar v w}({\bf i}, {\bf j}) ) = R_{\bar v w}({\bf i}, {\bf j}), 
\end{align*}
where $v,w\in \mathbb S^3$, $f\in \mathcal{A}_{\bf i}(\mathbb B^4 )$ and   $({\bf i}, {\bf j})\in T$.  From direct computations we see that 
 $$ { \mathcal{P}}_{\mathbb B^4 }  \circ
{\bf S}_{{\bf i}, {\bf j }} [f] ={ \mathcal{P}}_{\mathbb B^4 }  \left( \left( \begin{array}{cc}  \displaystyle   D_1[f, {\bf i}, {\bf j } ]  &  D_2[f,  {\bf i}, {\bf j }] \\   D_3[f, {\bf i}, {\bf j }  ]  &  D_4[f, {\bf i}, {\bf j}  ]    \end{array}\right)  ,   ( {\bf i}, {\bf j }  )  \right) = f,      
$$    
i.e., ${\bf S}_{{\bf i}, {\bf j }}$ is a section of $(\mathcal {HL}(D),  \mathcal{P}_{\mathbb B^4 } ,\mathcal{A}_{\bf i}(\mathbb B^4 ), T  )$.  
 \end{proof}

\begin{remark}
   The operations,  norm and quaternionic  inner product induced  on $ \mathcal {HL}(D) $ by   
	$\mathcal{A}_{\bf i}(\mathbb B^4 )$ can be used to study the  orthogonality, reproducing property of the Bergman kernel, 
	the behavior of  Toepliz operators among others facts from the point of view fiber bundle theory.
\end{remark}

\begin{proposition}
Given  ${\bf i},{\bf k}\in\mathbb S^2$ then 
   $(\mathcal {HL}(D ),  \mathcal{P}_{\mathbb B^4} ,\mathcal{A}_{\bf i}(\mathbb B^4), T  )$  and 
     $(\mathcal {HL}(D),  \mathcal{P}_{\mathbb B^4} ,\mathcal{A}_{\bf k}(\mathbb B^4), T  )$  are isomorphic sphere bundles.
\end{proposition}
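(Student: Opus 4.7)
The plan is to produce the isomorphism as essentially the pair of identity maps, exploiting the fact that the two bundles share the same total space $\mathcal{HL}(D)$, the same fiber $T$, the same structure group, and the same projection formula; only the codomain of $\mathcal{P}_{\mathbb B^4}$, viewed as a normed space, has been relabeled. The substantive content will rest entirely on Remark \ref{Rem11}, which supplies the norm equivalence between the two Bergman modules.

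First I would define $\Gamma_1 := \mathcal{I}_{\mathcal{HL}(D)}$ and let $\Gamma_2 : \mathcal{A}_{\bf i}(\mathbb B^4) \to \mathcal{A}_{\bf k}(\mathbb B^4)$ be the identity on the underlying set of slice regular functions. By Remark \ref{Rem11} the two spaces coincide as function sets, and the inequality $\|f\|_{\mathcal{A}_{\bf k}(\mathbb B^4)} \le \sqrt{2}\,\|f\|_{\mathcal{A}_{\bf i}(\mathbb B^4)}$ together with its symmetric counterpart (with ${\bf i}$ and ${\bf k}$ interchanged) shows that $\Gamma_2$ is continuous with continuous inverse; in particular it is a homeomorphism of metric spaces in the sense of Definition \ref{def122}.

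Next I would verify the commutation identity $\mathcal{P}_{\mathbb B^4} \circ \Gamma_1 = \Gamma_2 \circ \mathcal{P}_{\mathbb B^4}$. This is immediate from inspection: the definition of $\mathcal{P}_{\mathbb B^4}$ on an element with second coordinate $({\bf m}, {\bf n}) \in T$ returns the slice regular function $P_{{\bf m}, {\bf n}}[\,a + b{\bf m} + c{\bf n} + d{\bf m}{\bf n}\,]$, and this formula depends only on the matrix entries and on $({\bf m},{\bf n})$, not on the auxiliary unit (${\bf i}$ or ${\bf k}$) used to name the target Bergman space. Consequently both compositions return the same element of $\mathcal{SR}(\mathbb B^4)$.

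Finally, the inverse morphism $\Gamma^{-1}$ is obtained symmetrically by taking $\mathcal{I}_{\mathcal{HL}(D)}$ on the total space paired with the reverse identification $\mathcal{A}_{\bf k}(\mathbb B^4) \to \mathcal{A}_{\bf i}(\mathbb B^4)$ on the base; the compositions $\Gamma \circ \Gamma^{-1}$ and $\Gamma^{-1} \circ \Gamma$ are then literal identity morphisms. I do not expect a real obstacle: the only non-formal step is the bi-continuity of $\Gamma_2$, and this is exactly the content of Remark \ref{Rem11}, so the proposition is ultimately a repackaging of the equivalence of Bergman norms across slices as an isomorphism of sphere bundles.
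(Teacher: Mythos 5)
Your proposal is correct and is essentially the paper's own argument: the paper also takes the isomorphism to be the pair of identity maps $(\mathcal I, I)$ on $\mathcal{HL}(D)$ and on the common underlying function set of the two Bergman spaces. You simply spell out the details the paper leaves implicit, namely that Remark \ref{Rem11} gives bi-continuity of the base-space identification and that the commutation with $\mathcal{P}_{\mathbb B^4}$ is automatic.
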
 
\begin{proof}
The  isomorphism  is $(\mathcal I, I)$, where   $\mathcal I :  \mathcal {HL}(D) \to  \mathcal {HL}(D)$  and  
  $I : \mathcal{A}_{\bf i}(\mathbb B^4 ) \to \mathcal{A}_{\bf k}(\mathbb B^4 ) $  are the identity mappings.
\end{proof}

\begin{remark}
In a profoundly similar way to the computations  presented in \cite[Proposition 3.11]{O1} given 
 $A,B\in  \mathcal {HL}( D)$ we can define $A+B$,  $A\bullet_{_{{\bf i}, {\bf j}}} B, \mathcal D (A),
 \mathcal R_v(A) $ and $ A\ast B$ to see some algebraic properties of $\mathcal P_{\mathbb B^4 }$ such as  $
	 \mathcal P_{\mathbb B^4 } (A+B) =    \mathcal P_{\mathbb B^4 } (A) + \mathcal P_{\mathbb B^4 } (B)$, 
	$	\mathcal P_{\mathbb B^4 } (A\bullet_{_{{\bf i}, {\bf j}}} B) = \mathcal P_{\mathbb B^4  } (A)\bullet_{_{{\bf i}, {\bf j}}} \mathcal P_{\mathbb B^4   } (B)$, 
	$
 (\mathcal P_{\mathbb B^4   } (A))'=   \mathcal P_{\mathbb B^4   } (\mathcal D (A))$, 
 $\mathcal R_v(A) =   P_{v{\bf i}\bar v, v{\bf j}\bar v} [ \ v Q_{{\bf i}, {\bf j}} [\mathcal P_{\mathbb B^4   } (A) ] \bar v \ ] $ and  
 $\mathcal P_{\mathbb B^4   } (A\ast B) =   \mathcal P_{\mathbb B^4   } (A) \ast \mathcal P_{\mathbb B^4   } (B) $.

On the other hand,     the slice regular Bergman projection  $B_{\bf i}: L\to \mathcal A_{\bf i}(\mathbb B^4 ) $ 
   induces  the   pullback  bundle  $(B_{\bf i}^*(\mathcal {HL}(D) ), \mathcal{P}_{\mathbb B^4 }', L, T)$, 
  where  the total space 
 $$B_{\bf i}^*(\mathcal {HL}(D ) ) := \{  (f,A)\in L\times \mathcal {HL}(D) \ \mid \  \mathcal{P}_{\mathbb B^4 }(A)= B_{\bf i}(f)     \}  $$
   and  
  $  \mathcal{P}_{\mathbb B^4 }'(f,A)= f$ for all  $(f,A)\in B_{\bf i}^*(\mathcal {HL}(D )) $. 
 Note that     $(f,A)\in L\times \mathcal {HL}(D)$ iff   $A$  under $\mathcal{P}_{\mathbb B^4 }$ is assigned to
  slice regular part of $f$.

 In addition, Toeplitz operator  associated to 
 $\mathcal A_{\bf i}(\mathbb B^4 )$ are deeply related with   pullback bundles.  Given $\alpha\in C(\overline{\mathbb B^4_{\bf i}},\mathbb H)$ we see that   the  left-  and  right-Toeplitz operator  with symbol $\alpha$, 
    induce   the   pullback  bundles   $((T_{\alpha}^{\bf i})^*(\mathcal {HL}(D) ), \mathcal{P}_{\mathbb B^4 }', L, T)$ and
    $(   (T_{r, \alpha}^{\bf i}  ) ^*(\mathcal {HL}(D) ), \mathcal{P}_{\mathbb B^4 }', L, T)$ respectively, where     
   $$(T_{ \alpha}^{\bf i}  ) ^* (\mathcal {HL}(D ) ) := \{  (f,A)\in L\times \mathcal {HL}(D) \ \mid \  \mathcal{P}_{\mathbb B^4 }(A)=T_{\alpha}(f)     \} , $$
   $$(T_{r, \alpha}^{\bf i}  ) ^* (\mathcal {HL}(D ) ) := \{  (f,A)\in L\times \mathcal {HL}(D) \ \mid \  \mathcal{P}_{\mathbb B^4 }(A)= T_{r,\alpha}(f)     \} , $$
    $  \mathcal{P}_{\mathbb B^4 }'(f,A)= f$ for all  $(f,A)\in (T_{ \alpha}^{\bf i}  ) ^* (\mathcal {HL}(D ) ) $ and 
      $  \mathcal{P}_{\mathbb B^4 }'(f,A)= f$ for all  $(f,A)\in (T_{r, \alpha}^{\bf i}  ) ^* (\mathcal {HL}(D ) ) $.
      
\end{remark}

This work was presented in   talk "On slice regular Bergman space and fiber bundle theory" in the 14th ISAAC Congress 2023.

%\section*{Acknowledgments}The author would like to thank the Institute.  Polit\'ecnico Nacional (grant number  SIP20232103) and  CONAHCYT for partial support. 

%%%%%%%%%%%%%%%%%%%%%%%% referenc.tex %%%%%%%%%%%%%%%%%%%%%%%%%%%%%%
% sample references
% %
% Use this file as a template for your own input.
%
%%%%%%%%%%%%%%%%%%%%%%%% Springer-Verlag %%%%%%%%%%%%%%%%%%%%%%%%%%
%
% BibTeX users please use
% \bibliographystyle{}
% \bibliography{}
%

\end{document}